\newtheorem{prop}{Proposition}[section]
\newtheorem{lemma}[prop]{Lemma}
\newtheorem{theorem}[prop]{Theorem}
\theoremstyle{definition}
\begin{document}
\title{Some New Congruences for $l$-Regular Partitions Modulo $l$}
\author{S. Abinash}
\author{T. Kathiravan}
\author{K. Srilakshmi}
\keywords{Congruences, $l$-Regular Partitions, Theta function identities}
\subjclass[2010]{11P83, 05A17}

\address{Indian Institute of Science Education and Research Thiruvananthapuram, Maruthamala P.O., Vithura, Thiruvananthapuram-695551, Kerala, India.}
\email{sarmaabinash15@iisertvm.ac.in}
\address{The Institute of Mathematical Sciences, IV Cross Road, CIT Campus, Taramani, Chennai-600113, Tamil Nadu, India.}
\email{kkathiravan98@gmail.com}
\address{Indian Institute of Science Education and Research Thiruvananthapuram, Maruthamala P.O., Vithura, Thiruvananthapuram-695551, Kerala, India.}
\email{srilakshmi@iisertvm.ac.in}

\begin{abstract}
A partition of $n$ is $l$-regular if none of its parts is divisible by $l$. Let $b_l(n)$ denote the number of $l$-regular partitions of $n$. In this paper, using theta function identities due to Ramanujan, we establish some new infinite families of congruences for $b_l(n)$ modulo $l$, where $l=13,17,23$.
\end{abstract}

\maketitle
\section{Introduction}
We first recall a few terminologies and notations. Let $n$ be a positive integer. A partition of $n$ is a non-increasing sequence of positive integers whose sum is $n$, and the members of the sequence are called parts. For a positive integer $l\geq2$, a partition of $n$ is said to be $l$-regular if none of its parts is divisible by $l$. Let $b_l(n)$ denote the number of $l$-regular partitions of $n$. By convention, we assume that $b_l(0)=1$. The generating function for $b_l(n)$ is given by
\begin{equation}\label{eq:1.1}
\sum_{n=0}^\infty b_l(n)q^n=\frac{f_l}{f_1},
\end{equation}
where $|q|<1$ and for any positive integer $k$, $f_k$ is defined by
\begin{equation*}
f_k:=\prod_{i=1}^\infty\Big(1-q^{ki}\Big).
\end{equation*}
It can be easily verified that for any prime number $l$,
\begin{equation}\label{eq:1.2}
f_l\equiv f_1^l\pmod l.
\end{equation}

In recent years, numerous congruences satisfied by $l$-regular partitions for different values of $l$ has been established. For instance, Lovejoy and Penniston \cite{19} established criteria for $3$-divisibility of $b_3(n)$. Dandurand and Penniston \cite{7} employed the theory of complex multiplication to give a precise description of those $n$ such that $l|b_l(n)$ for $l\in\{5,7,11\}$. Cui and Gu \cite{8} derived infinite families of congruences modulo $2$ for $b_l(n)$ where $l\in\{2,4,5,8,13,16\}$. Webb \cite{9} established an infinite family of congruences modulo $3$ for $b_{13}(n)$. Furcy and Penniston \cite{10} obtained families of congruences modulo $3$ for other values of $l$ which are congruent to $1$ modulo $3$. Xia and Yao \cite{11} found some infinite families of congruences for $b_9(n)$ modulo $2$, and Cui and Gu \cite{12} derived congruences for $b_9(n)$ modulo $3$. Xia \cite{1} established infinite families of congruences for $b_l(n)$ modulo $l$ where $l\in\{13,17,19\}$, for example, for any $n\geq0$, $k\geq0$, 
\begin{equation*}
b_{13}\Bigg(5^{12k}n+\frac{1}{2}\Big(5^{12k}-1\Big)\Bigg)\equiv b_{13}(n)\pmod{13}.
\end{equation*}
For a more complete literature review, see \cite{1}. For more related works, see \cite{13,14,15,16,17,18,20,21,2,26,25,27,22,23,24,28}.

In this paper, we establish infinite families of arithmetic properties and Ramanujan-type congruences for $b_l(n)$ modulo $l$, where $l\in\{13,17,23\}$. In fact, no such congruence has been established for $23$-regular partitions till now, to the best of the authors' knowledge. The following are the main results.
\begin{theorem}\label{thm4}
For any $n\geq0$, $k\geq0$,
\begin{equation}\label{eq:1.3}
b_{13}\Bigg(7^{4k}n+\frac{1}{2}\Big(7^{4k}-1\Big)\Bigg)\equiv2^kb_{13}(n)\pmod{13},
\end{equation}
and for $r\in\{0,1,\cdots,6\}\setminus\{3\}$,
\begin{equation}
b_{13}\Bigg(7^{4k+4}n+\frac{1}{2}\Big(7^{4k+3}(2r+1)-1\Big)\Bigg)\equiv0\pmod{13}.
\end{equation}
\end{theorem}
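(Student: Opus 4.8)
The plan is to work $2$-adically with the generating function $\sum b_{13}(n)q^n = f_{13}/f_1$. Using \eqref{eq:1.2} we have $f_{13}\equiv f_1^{13}\pmod{13}$, so modulo $13$,
\[
\sum_{n=0}^\infty b_{13}(n)q^n \equiv f_1^{12} \pmod{13}.
\]
The first step is to understand $q^{1/2}f_1^{12}$, or rather the series $\sum b_{13}(n)q^{2n+1}$ (the shift by $\tfrac12(7^{4k}-1)$ signals that it is natural to pass to the variable $q^2$ and track the arithmetic progression of exponents $\equiv 1 \pmod 2$). Concretely I would consider $q\,f_1(q^2)^{12}$ — or, after a standard $2$-dissection, express $f_1^{12}$ in a form adapted to the prime $7$. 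The key identity to invoke is one of Ramanujan's theta-function dissections that exhibits $f_1^{\,a}$ (for the relevant exponent) in terms of $q$-series supported on explicit residue classes modulo $7$, together with the classical fact that $f_1^{3}$ — or an appropriate power related to $12 = \dim$ of a relevant space — has a $7$-dissection with a clean "self-similar" piece. Since $12 \equiv 5 \pmod 7$ and the Dedekind eta quotient here has weight $6$, one expects the space of cusp forms of weight $6$ on $\Gamma_0(7^{\,\bullet})$ (or the direct $q$-series manipulation) to be the engine; the coefficient $2^k$ should emerge as the eigenvalue (namely $a_7$ reduced mod $13$, or an explicit root of the Hecke recursion) governing the $U_7$-type operator acting on the relevant one-dimensional piece.

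The second step is to set up the induction on $k$. Having identified an operator $T$ (a composition of $2$-dissection and extraction of a fixed residue class mod $7$, applied four times to match the exponent $7^4$) such that $T$ sends the generating series to $2$ times itself modulo $13$, the congruence \eqref{eq:1.3} follows by iterating: apply $T$ to $\sum b_{13}(n)q^{2n+1}\equiv q f_1^{12}\pmod{13}$, obtaining $\sum b_{13}(7^4 m + \tfrac12(7^4-1))q^{2m+1}\equiv 2\,q f_1^{12}\pmod{13}$, and then induct. The $r$-statement comes out of the same analysis: the other six residue classes mod $7$ (those with $r\in\{0,\dots,6\}\setminus\{3\}$, i.e. the arithmetic progressions $\tfrac12(7^{4k+3}(2r+1)-1) \pmod{7^{4k+4}}$) are precisely the ones on which the dissected series vanishes identically modulo $13$, because $f_1^{12}$ has no terms in those residue classes mod $7$ after the relevant dissection — this is where one must check that the "extra" pieces of the Ramanujan dissection are all $\equiv 0\pmod{13}$, which will follow from a congruence between eta-quotients (a specialization of \eqref{eq:1.2}-type reasoning or a dimension count forcing a cusp form to be a multiple of $13$).

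The main obstacle I anticipate is locating and verifying the precise theta-function identity that produces the factor $2$ (and not some other constant) and simultaneously kills the six off-diagonal classes modulo $13$ after exactly four iterations — i.e. pinning down why the period is $7^4$ rather than $7$ or $7^2$. This is the arithmetic heart: it amounts to computing the action of $U_7$ on a specific modular form mod $13$ and showing its characteristic behavior has the stated shape. I expect this to reduce, after the Ramanujan dissection, to a $2\times 2$ (or small) linear recursion over $\mathbb{F}_{13}$ whose matrix has order dividing $4$ projectively with the scalar $2$ appearing; verifying that recursion and the vanishing of the complementary components is the one genuinely computational—but finite—check. Everything else (the $2$-dissection bookkeeping, the passage from $q$ to $q^2$, and the induction) is routine.
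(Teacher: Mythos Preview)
Your general intuition---reduce $\sum b_{13}(n)q^n$ to $f_1^{12}\pmod{13}$, apply a $7$-dissection of $f_1$, and look for a small linear recursion over $\mathbb{F}_{13}$ whose iterate produces the scalar $2$---is the right shape, and matches the paper's strategy. But there is a genuine misstep and a genuine gap.

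The misstep: your reading of the shift $\tfrac12(7^{4k}-1)$ as a signal to pass to $q^2$ and do a $2$-dissection is simply wrong. That $\tfrac12$ is nothing more than the observation that $7^{4k}-1$ is even; the number $\tfrac12(7^{4k}-1)$ is an ordinary integer (e.g.\ $1200$ when $k=1$), and the arithmetic progression in question is a residue class modulo $7^{4k}$, not modulo $2$. There is no $2$-adic structure in the problem, and introducing $q\,f_1(q^2)^{12}$ would lead nowhere. Everything happens purely with the $7$-dissection.

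The gap: you have not identified the concrete identity that drives the computation. What the paper uses is Ramanujan's septic dissection
\[
f_1=f_{49}\Bigl(\tfrac{B(q^7)}{C(q^7)}-q\tfrac{A(q^7)}{B(q^7)}-q^2+q^5\tfrac{C(q^7)}{A(q^7)}\Bigr)
\]
(Entry 17(v) in Berndt's \emph{Notebooks III}) together with the four auxiliary relations (Entry 31 in \emph{Notebooks IV}) expressing symmetric combinations of $A,B,C$ in terms of $f_1^4/f_7^4$. Raising the dissection to the $12$th power, extracting the $q^{7n+3}$ part, and simplifying via those four relations yields the single clean identity
\[
\sum_{n\ge0} b_{13}(7n+3)q^n \equiv 3f_1^{12}+2q^3 f_7^{12}\pmod{13}.
\]
This is exactly the two-term recursion you were hoping for: writing $F=\sum b_{13}(n)q^n$, one has $U(F)\equiv 3F+2q^3F(q^7)$, where $U$ extracts the class $7n+3$. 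Iterating three times gives $U^3(F)\equiv 2q^3 F(q^7)\pmod{13}$ (the ``diagonal'' coefficient $3$ cubes to $27\equiv 1$, and the cross-terms conspire to cancel), so extracting $q^{7n+3}$ once more yields $b_{13}(7^4 n+1200)\equiv 2\,b_{13}(n)$, and the six residue classes $r\ne3$ vanish because $U^3(F)$ is supported only on $q^{7n+3}$. The period $7^4$ thus falls out of the algebra, not from any Hecke-theoretic order computation.

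So: drop the $2$-dissection entirely, and replace the modular-forms hand-waving with the explicit septic identities above; then the argument goes through exactly as you outlined.
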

\begin{theorem}\label{thm1}
For any $n\geq0$, $k\geq0$,
\begin{equation}\label{eq:1.5}
b_{17}\Bigg(5^{4k}n+\frac{2}{3}\Big(5^{4k}-1\Big)\Bigg)\equiv2^k b_{17}(n)\pmod{17},
\end{equation}
and for $r\in\{0,1,2,4\}$,
\begin{equation}
b_{17}\Bigg(5^{4k+4}n+\frac{1}{3}\Big(5^{4k+3}(3r+1)-2\Big)\Bigg)\equiv0\pmod{17}.
\end{equation}
\end{theorem}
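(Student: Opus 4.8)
The plan is to work modulo $17$ throughout and to funnel everything through the single eta-quotient $f_1^{16}$. By \eqref{eq:1.1} and \eqref{eq:1.2}, $\sum_{n\ge0}b_{17}(n)q^{n}=f_{17}/f_1\equiv f_1^{16}\pmod{17}$, and since $(17-1)/24=2/3$ the natural companion object is $q^{2/3}f_1^{16}=q^{(17-1)/24}f_{17}/f_1=\sum_{n\ge0}b_{17}(n)q^{n+2/3}$. The shift in \eqref{eq:1.5} equals $5^{4k}\bigl(n+\tfrac23\bigr)-\tfrac23$, and the shift of the vanishing family equals $5^{4k+3}\bigl(5n+r+\tfrac13\bigr)-\tfrac23$; hence both statements are assertions about the iterated $5$-dissection of $f_1^{16}$ modulo $17$. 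Precisely, it suffices to prove that extracting the subseries $\sum_{n}b_{17}\bigl(5^{4}n+\tfrac23(5^4-1)\bigr)q^{n}$ from $\sum_{n}b_{17}(n)q^{n}$ returns $2f_1^{16}\pmod{17}$, that this extraction factors as a composite of four single $5$-dissection steps through auxiliary eta-quotients $G_1,G_2,G_3$, and that at each step four of the five residue classes are $\equiv0\pmod{17}$ while the fifth continues the cycle; everything else then follows by induction on $k$.

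The heart of the argument is a \emph{closed} system of $5$-dissection identities. Write $f_1^{16}=f_1\cdot(f_1^3)^5$ and use the classical identity $f_1^3=\sum_{n\ge0}(-1)^n(2n+1)q^{n(n+1)/2}$, whose exponents are triangular numbers and therefore lie only in the residue classes $\{0,1,3\}$ modulo $5$; thus in the $5$-dissection $f_1^3=A_0+A_1+A_3$ only three parts occur. Together with Ramanujan's Rogers--Ramanujan $5$-dissection of $f_1$ (in terms of $f_5$ and $f_{25}$), and, as convenient, the $5$-dissections of $\psi(q)=f_2^2/f_1$, $\varphi(q)=f_2^5/(f_1^2f_4^2)$ and of $1/f_1$, one expands $(f_1^3)^5=(A_0+A_1+A_3)^5$ by the multinomial theorem --- the exponent of $A_0^aA_1^bA_3^c$ with $a+b+c=5$ being $\equiv b+3c\pmod 5$ --- multiplies by the $5$-dissection of $f_1$, and thereby rewrites each of the five residue classes of $f_1^{16}$ modulo $5$, reduced mod $17$, as an explicit integer combination of a \emph{finite} list of eta-quotients in $q$, say $G_0:=f_1^{16},G_1,G_2,G_3$. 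The substance of the computation is that this list is stable under the dissection-and-renormalization step $T$: one expects $T(G_i)\equiv c_i\,G_{i+1}\pmod{17}$ (indices mod $4$) with nonzero constants $c_i$ satisfying $c_0c_1c_2c_3\equiv2\pmod{17}$, all remaining branches being $\equiv0\pmod{17}$. The value $2$ and the period $4$ reflect the action of $U_5$ on the relevant finite-dimensional space of mod-$17$ forms carrying $q^{2/3}f_1^{16}$, but they will fall out of the explicit $q$-series identities.

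Granting this system, both conclusions follow by a short induction on $k$. The case $k=0$ of \eqref{eq:1.5} is trivial, and $k=1$ is precisely the composite $G_0\xrightarrow{T}G_1\xrightarrow{T}G_2\xrightarrow{T}G_3\xrightarrow{T}G_0$, whose multiplier is $c_0c_1c_2c_3\equiv2$; replacing $n$ by $5^4n+\tfrac23(5^4-1)$ in \eqref{eq:1.5} and applying this four-step block once more advances $k$ to $k+1$. The vanishing family is read off from the four dead residues at the last step $G_3\rightsquigarrow G_0$: among the five lifts $5n+r+\tfrac13$, $r\in\{0,1,\dots,4\}$, of the surviving class at level $4k+3$, the value $r=3$ is the one for which $5^{4k+3}\bigl(5n+r+\tfrac13\bigr)=5^{4k+4}\bigl(n+\tfrac23\bigr)$, so it carries the factor $c_3$ and continues the main cycle (hence its exclusion), while for $r\in\{0,1,2,4\}$ the corresponding coefficient is $\equiv0\pmod{17}$, i.e.\ $b_{17}\bigl(5^{4k+4}n+\tfrac13(5^{4k+3}(3r+1)-2)\bigr)\equiv0\pmod{17}$; that these four residues are exactly the ones indexed by $r\in\{0,1,2,4\}$ at every level is again part of the induction.

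The main obstacle is precisely the word \emph{closed}: one must guess the right auxiliary eta-quotients $G_1,G_2,G_3$ and assemble exactly the bundle of Ramanujan theta identities that makes the $5$-dissection of each $G_i$ land back inside $\{G_0,G_1,G_2,G_3\}$ modulo $17$, rather than spawning an uncontrolled stream of new eta-quotients. That is the one non-mechanical step; once it is in place, the dissections, the constants $c_i$, and the induction are routine $q$-series bookkeeping. The companion Theorem~\ref{thm4} should follow by the same scheme with $(5,\tfrac23)$ replaced by $(7,\tfrac12)$.
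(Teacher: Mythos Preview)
Your overall strategy---reduce to $f_1^{16}$ modulo $17$, iterate the $5$-dissection, and look for a closed system that cycles with period $4$ and multiplier $2$---is exactly the paper's. But the structure you conjecture for that system is too optimistic, and taken literally your claim ``at each step four of the five residue classes are $\equiv0\pmod{17}$ while the fifth continues the cycle'' is false: already $b_{17}(0)=1$, so the residue $r=0$ at the very first dissection is not dead. The paper's computation shows that the intermediate series $\sum_n b_{17}(5n+1)q^n$, $\sum_n b_{17}(5^2n+16)q^n$, $\sum_n b_{17}(5^3n+41)q^n$ are each congruent to a genuine \emph{linear combination of several} eta-quotients (drawn from $f_1^{18}/f_5^2$, $qf_1^{12}f_5^4$, $q^2f_1^6f_5^{10}$, $qf_1^{10}f_5^6$, $q^2f_1^4f_5^{12}$, $q^3f_5^{18}/f_1^2$) together with a shifted copy of an earlier series---never to a single $c_iG_{i+1}$. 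The collapse you want happens only once, at the third step: after substituting the first-step identity back in, the eta-quotients in $\sum_n b_{17}(5^3n+41)q^n$ cancel against those coming from $10\sum_n b_{17}(5n+1)q^n$, leaving precisely $2\sum_n b_{17}(n)q^{5n+3}$. It is this single relation that yields both the period-$4$ recurrence with multiplier $2$ and the four vanishing residues $r\in\{0,1,2,4\}$; nothing vanishes at the earlier dissections.

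The paper also differs in tooling: instead of $f_1^{16}=f_1\cdot(f_1^3)^5$ and the triangular-number dissection of $f_1^3$, it uses the Rogers--Ramanujan $5$-dissection $f_1=f_{25}\bigl(R(q^5)^{-1}-q-q^2R(q^5)\bigr)$ together with the companion identity $R(q)^{-5}-11q-q^2R(q)^5=f_1^6/f_5^6$ (and its reciprocal for $1/f_1$), which turns each extraction step into a straightforward calculation in powers of $R$ and then collapses those powers back to $f_1^6/f_5^6$. Your $f_1^3$ route could in principle be pushed through, but you would still have to track a multi-term linear system across three dissections and exhibit the cancellation; the one-line ansatz $T(G_i)\equiv c_iG_{i+1}$ does not survive the first step.
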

\begin{theorem} \label{thm2}
For any $n\geq0$, $k\geq0$,
\begin{equation}\label{eq:1.6}
b_{23}\Bigg(5^{24k}n+\frac{11}{12}\Big(5^{24k}-1\Big)\Bigg)\equiv a(12k)b_{23}(n)\pmod{23},
\end{equation}
where
\begin{equation}\label{eq:1.9}
a(k)=\bigg(\frac{1}{2}-\frac{\sqrt{15}}{15}\bigg)\big(2+\sqrt{15}\big)^k+\bigg(\frac{1}{2}+\frac{\sqrt{15}}{15}\bigg)\big(2-\sqrt{15}\big)^k,
\end{equation}
and for $r\in\{0,1,2,3\}$,
\begin{equation}
b_{23}\Bigg(5^{24k+24}n+\frac{1}{12}\Big(5^{24k+23}(12r+7)-11\Big)\Bigg)\equiv0\pmod{23}.
\end{equation}
\end{theorem}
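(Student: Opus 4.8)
\emph{Proof plan.} Since $23$ is prime, \eqref{eq:1.2} gives $f_{23}\equiv f_1^{23}\pmod{23}$, so by \eqref{eq:1.1}
\[
\sum_{n\ge0}b_{23}(n)q^n\equiv\frac{f_1^{23}}{f_1}=f_1^{22}\pmod{23},
\]
and the whole theorem becomes a statement about the coefficients of the single series $f_1^{22}$ in arithmetic progressions whose common difference is a power of $5$. The relevant unit of iteration is dissection modulo $25$, not modulo $5$: if one $25$-dissects $f_1^{22}$ repeatedly, at each step multiplying the modulus by $25$, replacing the shift $\sigma$ by $25\sigma+22$, and rescaling $q^{25}\mapsto q$, then after $m$ steps the accumulated shift is $22\cdot\frac{25^m-1}{24}=\frac{11}{12}(5^{2m}-1)$, which for $m=12k$ is exactly $\frac{11}{12}(5^{24k}-1)$. (Equivalently, $\tfrac{11}{12}$ is the exponent in $\eta(z)^{22}=q^{11/12}f_1^{22}$, $q=e^{2\pi iz}$, and since $\tfrac{11}{12}(5-1)=\tfrac{11}{3}\notin\ZZ$ while $\tfrac{11}{12}(5^2-1)\in\ZZ$ the modulus $25$ is forced.) So the plan is to track these coefficients modulo $23$ under iterated $25$-dissection.

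The key input is a handful of Ramanujan's theta-function identities --- the $5$-dissections of $f(-q)=f_1$, of $\psi(q)=f_2^2/f_1$, and of $\varphi(q)$, together with the eta-quotient relations accompanying them --- which, assembled and reduced modulo $23$, yield an explicit $5$- (hence $25$-) dissection of $f_1^{22}$. The heart of the argument is to show that this keeps all the relevant information inside a \emph{two}-dimensional $\ZZ/23\ZZ$-vector space: concretely, that there is an explicit eta-quotient $g(q)$ and explicit constants $\alpha,\beta\in\ZZ/23\ZZ$ with
\[
\sum_{n\ge0}b_{23}(25n+22)q^n\equiv\alpha f_1^{22}+\beta\,g(q)\pmod{23},
\]
that the same $25$-dissection applied to $g$ (along its surviving residue class) again lies in $V:=\langle f_1^{22},g\rangle$, and that the residue classes other than the surviving ones contribute series $\equiv0\pmod{23}$ --- the latter giving at once the final congruence of the theorem, with $r\in\{0,1,2,3\}$ indexing the four dead classes at the last $5$-dissection step of a cycle and $r=4$ the survivor. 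Thus $V$ is stable under the operator $T$ = ``$25$-dissect, keep the surviving class, rescale'', and one computes the matrix of $T|_V$ explicitly; I expect its characteristic polynomial to be $x^2-4x-11$, whose roots $2\pm\sqrt{15}$ are exactly those producing the closed form \eqref{eq:1.9}.

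Granting this, the rest is linear algebra over $\ZZ/23\ZZ$. Iterating $T$ gives
\[
\sum_{n\ge0}b_{23}\!\Big(25^{m}n+\tfrac{11}{12}(5^{2m}-1)\Big)q^n\equiv\big(d(m)+\alpha\,c(m)\big)f_1^{22}+\beta\,c(m)\,g(q)\pmod{23},
\]
where, by the Cayley--Hamilton relation $T^2=4T+11\,I$, the sequences $c,d$ obey $u(m+2)=4u(m+1)+11u(m)$ with $(c(0),c(1))=(0,1)$ and $(d(0),d(1))=(1,0)$; solving the recursion identifies $d$ with the sequence $a$ of \eqref{eq:1.9} (and $c(m)=\tfrac{1}{2\sqrt{15}}\big((2+\sqrt{15})^m-(2-\sqrt{15})^m\big)$). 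A finite check gives $c(12)\equiv0\pmod{23}$ --- equivalently $\tfrac{2+\sqrt{15}}{2-\sqrt{15}}$ has order $12$ in $\FF_{23^2}^{\times}$, so that $T^{12}\equiv a(12)\,I\equiv14\,I\pmod{23}$ --- hence $c(12k)\equiv0$ for every $k\ge0$. Taking $m=12k$ above, the $g$-term drops out and the $f_1^{22}$-coefficient is $d(12k)=a(12k)$; comparing coefficients yields \eqref{eq:1.6}. Running the iteration one further full cycle, to $m=12(k+1)$, and reading off at its last $5$-dissection step the four residue classes other than the survivor --- then rewriting $25^{m}n+\cdots$ as $5^{24k+24}n+\tfrac1{12}\big(5^{24k+23}(12r+7)-11\big)$, $r\in\{0,1,2,3\}$ --- gives the remaining congruence.

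The main obstacle is exactly the step in the second paragraph: producing the $25$-dissection of $f_1^{22}$ modulo $23$ in a form one can iterate, pinning down the eta-quotient $g$, verifying that $\langle f_1^{22},g\rangle$ is $T$-stable (and that the other residue classes vanish modulo $23$), and computing $T|_V$ --- in particular its characteristic polynomial $x^2-4x-11$. That is where Ramanujan's theta identities are used in earnest and where essentially all of the $q$-series computation lives; once $T$ is in hand, the deductions of \eqref{eq:1.6} and of the vanishing family are the routine linear algebra above.
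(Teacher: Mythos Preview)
Your plan is correct and is essentially what the paper does: the paper's Lemma establishes, via the $5$-dissection identities \eqref{eq:2.1}--\eqref{eq:2.3}, the recursion $b_{23}(5^4n+572)\equiv4b_{23}(5^2n+22)+11b_{23}(n)\pmod{23}$, which is exactly your Cayley--Hamilton relation $T^2=4T+11I$, and the paper's sequences $(a(k),a'(k))$ coincide with your $(d(m),c(m))$; the check $a'(12)\equiv0\pmod{23}$ is your $c(12)\equiv0$, and \eqref{eq:1.6} follows.

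Two small corrections to your expectations. First, the second basis vector $g$ is not a single eta-quotient in the paper's execution but the combination $5q^2f_1^{10}f_5^{12}+4q^3f_1^4f_5^{18}+2q^4f_5^{24}/f_1^2$; equivalently, the paper simply works in the basis $\{\sum b_{23}(n)q^n,\ \sum b_{23}(5^2n+22)q^n\}$, where $T$ has companion matrix $\left(\begin{smallmatrix}0&11\\1&4\end{smallmatrix}\right)$. Second, your assertion that ``the residue classes other than the surviving ones contribute series $\equiv0\pmod{23}$'' is \emph{not} true at a generic $25$-dissection step (for instance $b_{23}(0)=1$), and it does not follow from $T$-stability of $V$ alone. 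In the paper it is established only at the passage from level $5^{23}$ to $5^{24}$, by one further explicit $5$-dissection showing that the $5^{23}$-level series is $\equiv14\sum_{n\ge0} b_{23}(n)q^{5n+4}$ and hence supported on a single residue class modulo~$5$; this extra computation is what produces the vanishing family, and then \eqref{eq:1.6} propagates it to all $k$.
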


\section{Preliminaries}
In this section we shall state two preliminary lemmas.
\begin{lemma}
For
\begin{equation*}
R(q):=\prod_{n=1}^\infty\frac{\big(1-q^{5n-4}\big)\big(1-q^{5n-1}\big)}{\big(1-q^{5n-3}\big)\big(1-q^{5n-2}\big)},
\end{equation*}
we have\\
A. (\cite{6} and \cite{5})
\begin{equation} \label{eq:2.1}
f_1=f_{25}\Bigg(\frac{1}{R(q^5)}-q-q^2R\big(q^5\big)\Bigg).
\end{equation}
B. (\cite{3,4})
\begin{equation} \label{eq:2.3}
\frac{1}{R(q)^5}-11q-q^2R(q)^5=\frac{f_1^6}{f_5^6}.
\end{equation}
C.
\begin{equation} \label{eq:2.2}
\frac{1}{f_1}=\frac{f_{25}^5}{f_5^6}\Bigg(\frac{1}{R(q^5)^4}+\frac{q}{R(q^5)^3}+\frac{2q^2}{R(q^5)^2}+\frac{3q^3}{R(q^5)}+5q^4-3q^5R\big(q^5\big)+2q^6R\big(q^5\big)^2-q^7R\big(q^5\big)^3+q^8R\big(q^5\big)^4\Bigg).
\end{equation}
\end{lemma}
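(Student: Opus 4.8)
The plan is to deduce \eqref{eq:2.2} from parts A and B of the lemma by rationalising the denominator that appears in \eqref{eq:2.1}. Write $t:=R(q^5)$ and put
\[
S:=\frac{1}{t}-q-q^2t,\qquad B:=\frac{1}{t^4}+\frac{q}{t^3}+\frac{2q^2}{t^2}+\frac{3q^3}{t}+5q^4-3q^5t+2q^6t^2-q^7t^3+q^8t^4,
\]
so that \eqref{eq:2.1} becomes $f_1=f_{25}\,S$, and hence $1/f_1=1/(f_{25}\,S)$. Thus everything reduces to finding a closed form for $1/S$, and the idea is that $S$ admits an explicit complementary factor whose product with $S$ is controlled by \eqref{eq:2.3}.

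First I would verify the elementary algebraic identity
\[
S\cdot B=\frac{1}{t^5}-11q^5-q^{10}t^5,
\]
valid as an identity in the two variables $q$ and $t$. To prove it one simply expands $S\cdot B=\tfrac1t B-qB-q^2tB$ and collects terms by the power of $t$: the contributions in $t^{-4},t^{-3},t^{-2},t^{-1}$ cancel, as do those in $t,t^2,t^3,t^4$, and one is left only with $t^{-5}$, the $t$-free term $-11q^5$, and $-q^{10}t^5$. Indeed, the coefficients of $B$ are exactly what is forced by demanding that all these intermediate powers of $t$ vanish.

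Next, substituting $q\mapsto q^5$ in \eqref{eq:2.3} gives
\[
\frac{1}{R(q^5)^5}-11q^5-q^{10}R(q^5)^5=\frac{f_5^6}{f_{25}^6},
\]
so the right-hand side of the identity above equals $f_5^6/f_{25}^6$; that is, $S\cdot B=f_5^6/f_{25}^6$, and therefore $1/S=(f_{25}^6/f_5^6)\,B$. Feeding this into $1/f_1=1/(f_{25}\,S)$ yields
\[
\frac{1}{f_1}=\frac{1}{f_{25}}\cdot\frac{f_{25}^6}{f_5^6}\,B=\frac{f_{25}^5}{f_5^6}\,B,
\]
and rewriting $B$ in terms of $R(q^5)$ gives precisely \eqref{eq:2.2}.

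There is no genuine obstacle here: parts A and B carry the analytic weight, and what remains is the polynomial factorisation $S\cdot B=t^{-5}-11q^5-q^{10}t^5$ together with the harmless substitution $q\mapsto q^5$ in \eqref{eq:2.3} (legitimate because \eqref{eq:2.3} holds as an identity of analytic functions on $|q|<1$). The only step requiring any care is the bookkeeping in that factorisation, which is entirely routine.
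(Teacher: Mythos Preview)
Your argument is correct and matches the paper's own approach: the paper simply remarks that \eqref{eq:2.2} follows by substituting $q\mapsto q^5$ in \eqref{eq:2.3} and then using \eqref{eq:2.1}, and your proof spells out exactly this, making explicit the complementary-factor identity $S\cdot B=t^{-5}-11q^5-q^{10}t^5$ that links the two.
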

\noindent
Note that the identity in (\ref{eq:2.2}) can be easily established by first substituting $q$ by $q^5$ in (\ref{eq:2.3}) and then using (\ref{eq:2.1}).
\begin{lemma}
For
\begin{equation*}
A=A(q):=\frac{f(-q^3,-q^4)}{f(-q^2)},\quad B=B(q):=\frac{f(-q^2,-q^5)}{f(-q^2)},\quad C=C(q):=\frac{f(-q,-q^6)}{f(-q^2)},
\end{equation*}
where $f(a,b):=\sum_{n=-\infty}^\infty a^{n(n+1)/2}b^{n(n-1)/2}$ for $|ab|<1$ and $f(-q):=f(-q,-q^2)$, we have\\
A. (\cite[p. 303, Entry 17(v)]{29})
\begin{equation} \label{eq:2.4}
f_1=f_{49}\Bigg(\frac{B\big(q^7\big)}{C(q^7)}-q\frac{A\big(q^7\big)}{B(q^7)}-q^2+q^5\frac{C\big(q^7\big)}{A(q^7)}\Bigg).
\end{equation}
B. (\cite[p. 174, Entry 31]{30} and \cite{31})
\begin{align}
\frac{B^5}{AC^4}-\frac{A^5}{B^4C}-\frac{q^3C^5}{A^4B}&=3q,\label{eq:2.5}\\
\frac{AB^2}{C^3}+\frac{qA^2C}{B^3}-\frac{q^2BC^2}{A^3}&=\frac{f_1^4}{f_7^4}+8q,\label{eq:2.6}\\
\frac{A^3}{BC^2}-\frac{qB^3}{A^2C}-\frac{q^2C^3}{AB^2}&=\frac{f_1^4}{f_7^4}+5q,\label{eq:2.7}\\
\frac{B^7}{C^7}-\frac{qA^7}{B^7}+\frac{q^5C^7}{A^7}&=\frac{f_1^8}{f_7^8}+14q\frac{f_1^4}{f_7^4}+57q^2.\label{eq:2.8}
\end{align}
\end{lemma}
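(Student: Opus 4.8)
Both parts are classical theta-function identities of Ramanujan; below I sketch how each could be obtained directly, though in the write-up one may simply cite the indicated sources. For part~A the plan is to $7$-dissect $f_1$, starting from Euler's pentagonal number theorem $f_1=\sum_{j=-\infty}^{\infty}(-1)^j q^{j(3j-1)/2}$. One checks that $j(3j-1)/2\bmod 7$ depends only on $j\bmod 7$ and takes only the values $0,1,2,5$ (never $3,4,6$): the value $0$ comes from $j\equiv 0,5$, the value $1$ from $j\equiv 1,4$, the value $2$ from $j\equiv 6$, and the value $5$ from $j\equiv 2,3\pmod 7$, so $f_1$ splits into exactly four subseries. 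Collecting each subseries and rewriting it via the Jacobi triple product $f(-q^a,-q^b)=(q^a;q^{a+b})_\infty(q^b;q^{a+b})_\infty(q^{a+b};q^{a+b})_\infty$ (with base $q^{49}$) identifies the part supported on exponents $\equiv0$ with $f_{49}\,B(q^7)/C(q^7)$, the part $\equiv1$ with $-q\,f_{49}\,A(q^7)/B(q^7)$, the part $\equiv2$ with the bare monomial $-q^2 f_{49}$ (the subseries over $j\equiv6$ collapses to $-q^2\sum_m(-1)^m q^{49m(3m-1)/2}$), and the part $\equiv5$ with $q^5 f_{49}\,C(q^7)/A(q^7)$; summing yields \eqref{eq:2.4}, which is Entry~17(v), Chapter~19 of \cite{29}.

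For part~B I would first record, again by the Jacobi triple product, the product forms $A=(q^3;q^7)_\infty(q^4;q^7)_\infty(q^7;q^7)_\infty/f_2$, with $B,C$ obtained on replacing $\{3,4\}$ by $\{2,5\}$ and by $\{1,6\}$; multiplying them gives the convenient relation $ABC=f_1f_7^2/f_2^3$. Each of \eqref{eq:2.5}--\eqref{eq:2.8} is homogeneous in $A,B,C$, and in every term the factors $f_2$ cancel, so after clearing denominators one is left with an identity purely among level-$7$ generalized Dedekind eta-quotients (each $(q^r;q^7)_\infty(q^{7-r};q^7)_\infty$ is a generalized eta-function, and $(q^7;q^7)_\infty=q^{-7/24}\eta(7\tau)$). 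Absorbing the explicit powers of $q$ into the natural $q$-expansion offsets turns both sides into holomorphic modular forms of the same, small weight on $\Gamma_1(7)$; since $[\SL_2(\ZZ):\Gamma_1(7)]=24$ and the relevant spaces are low-dimensional, the valence formula (Sturm's bound) reduces each identity to checking holomorphy at the cusps and agreement of a dozen or so initial $q$-coefficients, a finite verification. Alternatively one quotes \cite{30} (p.~174, Entry~31) and \cite{31}, where \eqref{eq:2.5}--\eqref{eq:2.8} are recorded.

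In both parts the conceptual content is slight and the work is bookkeeping: in part~A, matching each theta subseries to $A,B,C$ with the right signs and $q$-powers; in part~B, pinning down the precise level, weight and multiplier so that the generalized eta-quotients are genuine modular forms and the Sturm bound is legitimate. I expect this bookkeeping (rather than any single conceptual step) to be the main obstacle, which is why the cleanest presentation is to quote \cite{29} for \eqref{eq:2.4} and \cite{30,31} for \eqref{eq:2.5}--\eqref{eq:2.8}.
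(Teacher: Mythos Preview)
Your proposal is correct. The paper itself gives no proof of this lemma at all---it simply states the identities with attributions to \cite{29}, \cite{30}, and \cite{31} and moves on---so your recommendation to cite these sources is exactly the paper's approach, while your sketches of the $7$-dissection via the pentagonal number theorem and the modular-forms verification for part~B are a welcome bonus that the paper does not include.
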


\section{Congruences for 13-Regular Partitions}
\begin{proof}[\textbf{Proof of Theorem \ref{thm4}}]
For $k=0$, (\ref{eq:1.3}) is trivial.

Taking $l=13$ in (\ref{eq:1.1}), we get
\begin{equation*}
\sum_{n=0}^\infty b_{13}(n)q^n=\frac{f_{13}}{f_1}.
\end{equation*}
Now by (\ref{eq:1.2}),
\begin{equation} \label{eq:6.1}
\sum_{n=0}^\infty b_{13}(n)q^n\equiv f_1^{12}\pmod{13}.
\end{equation}
Replacing $f_1$ using (\ref{eq:2.4}), we get
\begin{equation*}
\sum_{n=0}^\infty b_{13}(n)q^n\equiv f_{49}^{12}\Bigg(\frac{B\big(q^7\big)}{C(q^7)}-q\frac{A\big(q^7\big)}{B(q^7)}-q^2+q^5\frac{C\big(q^7\big)}{A(q^7)}\Bigg)^{12}\pmod{13}.
\end{equation*}
If we extract the terms involving $q^{7n+3}$, divide by $q^3$, and substitute $q$ by $q^{1/7}$, then we get
\begin{equation*}
\begin{aligned}[b]
\sum_{n=0}^\infty b_{13}(7n+3)q^n&\equiv f_{7}^{12}\Bigg(\bigg(\frac{A^3B^6}{C^9}+\frac{2AB^9}{C^{10}}\bigg)\\
&\quad+q\bigg(\frac{A^{10}}{B^8C^2}+\frac{9A^8}{B^5C^3}+\frac{2A^6}{B^2C^4}+\frac{3A^4B}{C^5}+\frac{B^{10}}{A^2C^8}+\frac{8A^2B^4}{C^6}+\frac{2B^7}{C^7}\bigg)\\
&\quad+q^2\bigg(-\frac{2A^9C}{B^{10}}-\frac{2A^7}{B^7}-\frac{4A^5}{B^4C}+\frac{9B^8}{A^3C^5}+\frac{9A^3}{BC^2}+\frac{4B^5}{AC^4}+\frac{12AB^2}{C^3}\bigg)\\
&\quad+q^3\bigg(\frac{A^6C^3}{B^9}+\frac{8A^4C^2}{B^6}+\frac{2B^6}{A^4C^2}+\frac{12A^2C}{B^3}-\frac{9B^3}{A^2C}+9\bigg)\\
&\quad+q^4\bigg(\frac{3B^4C}{A^5}-\frac{12BC^2}{A^3}-\frac{3AC^4}{B^5}-\frac{9C^3}{AB^2}\bigg)+q^5\bigg(\frac{8B^2C^4}{A^6}-\frac{4C^5}{A^4B}+\frac{2C^6}{A^2B^4}\bigg)\\
&\quad+q^6\bigg(-\frac{B^3C^6}{A^9}+\frac{2C^7}{A^7}-\frac{9C^8}{A^5B^3}\bigg)+q^7\bigg(\frac{C^{10}}{A^8B^2}-\frac{2BC^9}{A^{10}}\bigg)\Bigg)\pmod{13}.
\end{aligned}
\end{equation*}
After rearranging, we get
\begin{align*}
\sum_{n=0}^\infty b_{13}(7n+3)q^n&\equiv f_7^{12}\Bigg(2\bigg(\frac{AB^9}{C^{10}}-\frac{q^2A^9C}{B^{10}}-\frac{q^7BC^9}{A^{10}}\bigg)+\bigg(\frac{A^3B^6}{C^9}+\frac{q^3A^6C^3}{B^9}-\frac{q^6B^3C^6}{A^9}\bigg)\\
&\quad+q\bigg(\frac{A^{10}}{B^8C^2}+\frac{B^{10}}{A^2C^8}+\frac{q^6C^{10}}{A^8B^2}\bigg)+2q\bigg(\frac{B^7}{C^7}-\frac{qA^7}{B^7}+\frac{q^5C^7}{A^7}\bigg)\\
&\quad+8q\bigg(\frac{A^2B^4}{C^6}+\frac{q^2A^4C^2}{B^6}+\frac{q^4B^2C^4}{A^6}\bigg)+3q\bigg(\frac{A^4B}{C^5}+\frac{q^3B^4C}{A^5}-\frac{q^3AC^4}{B^5}\bigg)\\
&\quad+2q\bigg(\frac{A^6}{B^2C^4}+\frac{q^2B^6}{A^4C^2}+\frac{q^4C^6}{A^2B^4}\bigg)+9q\bigg(\frac{A^8}{B^5C^3}+\frac{qB^8}{A^3C^5}-\frac{q^5C^8}{A^5B^3}\bigg)\\
&\quad+4q^2\bigg(\frac{B^5}{AC^4}-\frac{A^5}{B^4C}-\frac{q^3C^5}{A^4B}\bigg)+12q^2\bigg(\frac{AB^2}{C^3}+\frac{qA^2C}{B^3}-\frac{q^2BC^2}{A^3}\bigg)\\
&\quad+9q^2\bigg(\frac{A^3}{BC^2}-\frac{qB^3}{A^2C}-\frac{q^2C^3}{AB^2}\bigg)+9q^3\Bigg)\pmod{13}.
\end{align*}
We rewrite it in the following manner.
\begin{align*}
&\sum_{n=0}^\infty b_{13}(7n+3)q^n\\
&\equiv f_7^{12}\Bigg(2\bigg(\Big(\frac{B^7}{C^7}-\frac{qA^7}{B^7}+\frac{q^5C^7}{A^7}\Big)\Big(\frac{AB^2}{C^3}+\frac{qA^2C}{B^3}-\frac{q^2BC^2}{A^3}\Big)\\
&\quad\quad-q\Big(\frac{A^3}{BC^2}-\frac{qB^3}{A^2C}-\frac{q^2C^3}{AB^2}\Big)\Big(\Big(\frac{B^5}{AC^4}-\frac{A^5}{B^4C}-\frac{q^3C^5}{A^4B}\Big)-q\Big)\bigg)\\
&\quad+\bigg(\Big(\frac{AB^2}{C^3}+\frac{qA^2C}{B^3}-\frac{q^2BC^2}{A^3}\Big)^3-3q\Big(\frac{A^3}{BC^2}-\frac{qB^3}{A^2C}-\frac{q^2C^3}{AB^2}\Big)\Big(\frac{AB^2}{C^3}+\frac{qA^2C}{B^3}-\frac{q^2BC^2}{A^3}\Big)-3q^3\bigg)\\
&\quad+q\bigg(\Big(\frac{B^5}{AC^4}-\frac{A^5}{B^4C}-\frac{q^3C^5}{A^4B}\Big)^2+2q\Big(\frac{B^5}{AC^4}-\frac{A^5}{B^4C}-\frac{q^3C^5}{A^4B}\Big)\\
&\quad\quad+2\Big(\frac{AB^2}{C^3}+\frac{qA^2C}{B^3}-\frac{q^2BC^2}{A^3}\Big)\Big(\frac{A^3}{BC^2}-\frac{qB^3}{A^2C}-\frac{q^2C^3}{AB^2}\Big)+6q^2\bigg)\\
&\quad+2q\bigg(\frac{B^7}{C^7}-\frac{qA^7}{B^7}+\frac{q^5C^7}{A^7}\bigg)+8q\bigg(\Big(\frac{AB^2}{C^3}+\frac{qA^2C}{B^3}-\frac{q^2BC^2}{A^3}\Big)^2-2q\Big(\frac{A^3}{BC^2}-\frac{qB^3}{A^2C}-\frac{q^2C^3}{AB^2}\Big)\bigg)\\
&\quad+3q\bigg(q\Big(\frac{B^5}{AC^4}-\frac{A^5}{B^4C}-\frac{q^3C^5}{A^4B}\Big)+\Big(\frac{AB^2}{C^3}+\frac{qA^2C}{B^3}-\frac{q^2BC^2}{A^3}\Big)\Big(\frac{A^3}{BC^2}-\frac{qB^3}{A^2C}-\frac{q^2C^3}{AB^2}\Big)+3q^2\bigg)\\
&\quad+2q\bigg(\Big(\frac{A^3}{BC^2}-\frac{qB^3}{A^2C}-\frac{q^2C^3}{AB^2}\Big)^2+2q\Big(\frac{AB^2}{C^3}+\frac{qA^2C}{B^3}-\frac{q^2BC^2}{A^3}\Big)\bigg)\\
&\quad+9q\bigg(\Big(\frac{AB^2}{C^3}+\frac{qA^2C}{B^3}-\frac{q^2BC^2}{A^3}\Big)^2-\Big(\frac{A^3}{BC^2}-\frac{qB^3}{A^2C}-\frac{q^2C^3}{AB^2}\Big) \Big(\Big(\frac{B^5}{AC^4}-\frac{A^5}{B^4C}-\frac{q^3C^5}{A^4B}\Big)+q\Big)\bigg)\\
&\quad+4q^2\bigg(\frac{B^5}{AC^4}-\frac{A^5}{B^4C}-\frac{q^3C^5}{A^4B}\bigg)+12q^2\bigg(\frac{AB^2}{C^3}+\frac{qA^2C}{B^3}-\frac{q^2BC^2}{A^3}\bigg)\\
&\quad+9q^2\bigg(\frac{A^3}{BC^2}-\frac{qB^3}{A^2C}-\frac{q^2C^3}{AB^2}\bigg)+9q^3\Bigg)\pmod{13}.
\end{align*}
By (\ref{eq:2.5}), (\ref{eq:2.6}), (\ref{eq:2.7}), and (\ref{eq:2.8}), we get
\begin{equation*}
\begin{aligned}[b]
\sum_{n=0}^\infty b_{13}(7n+3)q^n&\equiv f_7^{12}\Bigg(2\bigg(\Big(\frac{f_1^8}{f_7^8}+q\frac{f_1^4}{f_7^4}+5q^2\Big)\Big(\frac{f_1^4}{f_7^4}+8q\Big)-q\Big(\frac{f_1^4}{f_7^4}+5q\Big)\Big(\Big(3q\Big)-q\Big)\bigg)\\
&\quad+\bigg(\Big(\frac{f_1^4}{f_7^4}+8q\Big)^3-3q\Big(\frac{f_1^4}{f_7^4}+5q\Big)\Big(\frac{f_1^4}{f_7^4}+8q\Big)-3q^3\bigg)\\
&\quad+q\bigg(\Big(3q\Big)^2+2q\Big(3q\Big)+2\Big(\frac{f_1^4}{f_7^4}+8q\Big)\Big(\frac{f_1^4}{f_7^4}+5q\Big)+6q^2\bigg)+2q\bigg(\frac{f_1^8}{f_7^8}+q\frac{f_1^4}{f_7^4}+5q^2\bigg)\\
&\quad+8q\bigg(\Big(\frac{f_1^4}{f_7^4}+8q\Big)^2-2q\Big(\frac{f_1^4}{f_7^4}+5q\Big)\bigg)+3q\bigg(q\Big(3q\Big)+\Big(\frac{f_1^4}{f_7^4}+8q\Big)\Big(\frac{f_1^4}{f_7^4}+5q\Big)+3q^2\bigg)\\
&\quad+2q\bigg(\Big(\frac{f_1^4}{f_7^4}+5q\Big)^2+2q\Big(\frac{f_1^4}{f_7^4}+8q\Big)\bigg)+9q\bigg(\Big(\frac{f_1^4}{f_7^4}+8q\Big)^2-\Big(\frac{f_1^4}{f_7^4}+5q\Big)\Big(\Big(3q\Big)+q\Big)\bigg)\\
&\quad+4q^2\bigg(3q\bigg)+12q^2\bigg(\frac{f_1^4}{f_7^4}+8q\bigg)+9q^2\bigg(\frac{f_1^4}{f_7^4}+5q\bigg)+9q^3\Bigg)\pmod{13},
\end{aligned}
\end{equation*}
which, after rearranging the similar terms, yields
\begin{equation*}
\sum_{n=0}^\infty b_{13}(7n+3)q^n\equiv3f_1^{12}+2q^3f_7^{12}\pmod{13}.
\end{equation*}

In view of (\ref{eq:6.1}), we get
\begin{equation}\label{eq:6.2}
\sum_{n=0}^\infty b_{13}(7n+3)q^n\equiv3\sum_{n=0}^\infty b_{13}(n)q^n+2\sum_{n=0}^\infty b_{13}(n)q^{7n+3}\pmod{13}.
\end{equation}
If we extract the terms involving $q^{7n+3}$, divide by $q^3$, and substitute $q$ by $q^{1/7}$, then we get
\begin{equation*}
\sum_{n=0}^\infty b_{13}\big(7^2n+24\big)q^n\equiv3\sum_{n=0}^\infty b_{13}(7n+3)q^n+2\sum_{n=0}^\infty b_{13}(n)q^n\pmod{13}.
\end{equation*}
Now by (\ref{eq:6.2}),
\begin{align*}
\sum_{n=0}^\infty b_{13}\big(7^2n+24\big)q^n\equiv11\sum_{n=0}^\infty b_{13}(n)q^n+6\sum_{n=0}^\infty b_{13}(n)q^{7n+3}\pmod{13}.
\end{align*}
Repeating the same process as above once more, we get
\begin{equation}\label{eq:6.3}
\sum_{n=0}^\infty b_{13}\big(7^3n+171\big)q^n\equiv2\sum_{n=0}^\infty b_{13}(n)q^{7n+3}\pmod{13}.
\end{equation}

Comparing the coefficients of the terms of the form $q^{7n+3}$, we can conclude that for any $n\geq0$,
\begin{align*}
b_{13}\big(7^4n+1200\big)\equiv2b_{13}(n)\pmod{13},
\end{align*}
or,
\begin{align*}
b_{13}\Bigg(7^4n+\frac{1}{2}\Big(7^4-1\Big)\Bigg)\equiv2b_{13}(n)\pmod{13}.
\end{align*}
This proves (\ref{eq:1.3}) for $k=1$. Suppose
\begin{align*}
b_{13}\Bigg(7^{4(k-1)}n+\frac{1}{2}\Big(7^{4(k-1)}-1\Big)\Bigg)\equiv2^{k-1}b_{13}(n)\pmod{13}.
\end{align*}
If we substitute $n$ by $7^4n+\frac{1}{2}\Big(7^4-1\Big)$, then we get
\begin{align*}
b_{13}\Bigg(7^{4(k-1)}\bigg(7^4n+\frac{1}{2}\Big(7^4-1\Big)\bigg)+\frac{1}{2}\Big(7^{4(k-1)}-1\Big)\Bigg)\equiv2^{k-1}b_{13}\bigg(7^4n+\frac{1}{2}\Big(7^4-1\Big)\bigg)\pmod{13},
\end{align*}
which yields
\begin{align*}
b_{13}\Bigg(7^{4k}n+\frac{1}{2}\Big(7^{4k}-1\Big)\Bigg)\equiv2^kb_{13}(n)\pmod{13}.
\end{align*}
This completes the proof of (\ref{eq:1.3}) by the principle of mathematical induction.

In (\ref{eq:6.3}), comparing the coefficients of the terms of the form $q^{7n+r}$ where $r\in\{0,1,\cdots,6\}\setminus\{3\}$, we can conclude that for any $n\geq0$,
\begin{align*}
b_{13}\Bigg(7^3(7n+r)+\frac{7^3-1}{2}\Bigg)\equiv0\pmod{13},
\end{align*}
or,
\begin{align}\label{eq:6.4}
b_{13}\Bigg(7^4n+\frac{1}{2}\Big(7^3(2r+1)-1\Big)\Bigg)\equiv0\pmod{13}.
\end{align}
Now by (\ref{eq:1.3}) and (\ref{eq:6.4}), we get
\begin{align*}
b_{13}\Bigg(7^{4k+4}n+\frac{1}{2}\Big(7^{4k+3}(2r+1)-1\Big)\Bigg)\equiv0\pmod{13}.
\end{align*}
This completes the proof of the theorem.
\end{proof}

\section{Congruences for 17-Regular Partitions}
\begin{proof}[\textbf{Proof of Theorem \ref{thm1}}]
For $k=0$, (\ref{eq:1.5}) is trivial.

Taking $l=17$ in (\ref{eq:1.1}), we get
\begin{equation*}
\sum_{n=0}^\infty b_{17}(n)q^n=\frac{f_{17}}{f_1}.
\end{equation*}
Now by (\ref{eq:1.2}),
\begin{equation} \label{eq:3.1}
\sum_{n=0}^\infty b_{17}(n)q^n\equiv f_1^{16}\pmod{17}.
\end{equation}
Replacing $f_1$ using (\ref{eq:2.1}) and then extracting the terms involving $q^{5n+1}$, we get
\begin{align*}
\sum_{n=0}^\infty b_{17}(5n+1)q^{5n+1}&\equiv f_{25}^{16}\Bigg(\frac{q}{R(q^5)^{15}}+\frac{13q^6}{R(q^5)^{10}}+\frac{8q^{11}}{R(q^5)^5}-q^{16}-8q^{21}R\big(q^5\big)^5\\
&\quad\quad+13q^{26}R\big(q^5\big)^{10}-q^{31}R\big(q^5\big)^{15}\Bigg)\pmod{17}.
\end{align*}
If we divide by $q$ and substitute $q$ by $q^{1/5}$, then we get
\begin{equation*}
\begin{aligned}[b]
\sum_{n=0}^\infty b_{17}(5n+1)q^n&\equiv f_5^{16}\Bigg(\bigg(\frac{1}{R(q)^5}-11q-q^2R(q)^5\bigg)^3+12q\bigg(\frac{1}{R(q)^5}-11q-q^2R(q)^5\bigg)^2\\
&\quad+14q^2\bigg(\frac{1}{R(q)^5}-11q-q^2R(q)^5\bigg)+7q^3\Bigg)\pmod{17}.
\end{aligned}
\end{equation*}
By (\ref{eq:2.3}), we get
\begin{equation*}
\sum_{n=0}^\infty b_{17}(5n+1)q^n\equiv f_5^{16}\Bigg(\frac{f_1^{18}}{f_5^{18}}+12q\frac{f_1^{12}}{f_5^{12}}+14q^2\frac{f_1^6}{f_5^6}+7q^3\Bigg)\pmod{17}.
\end{equation*}
In view of (\ref{eq:3.1}), we get
\begin{equation}\label{eq:3.3}
\sum_{n=0}^\infty b_{17}(5n+1)q^n\equiv\frac{f_1^{18}}{f_5^2}+12qf_1^{12}f_5^4+14q^2f_1^6f_5^{10}+7\sum_{n=0}^\infty b_{17}(n)q^{5n+3}\pmod{17}.
\end{equation}

Replacing $f_1$ using (\ref{eq:2.1}), we get
\begin{equation*}
\begin{aligned}[b]
\sum_{n=0}^\infty b_{17}(5n+1)q^n&\equiv\frac{f_{25}^{18}}{f_5^2}\Bigg(\frac{1}{R(q^5)}-q-q^2R\big(q^5\big)\Bigg)^{18}+12qf_5^4f_{25}^{12}\Bigg(\frac{1}{R(q^5)}-q-q^2R\big(q^5\big)\Bigg)^{12}\\
&\quad+14q^2f_5^{10}f_{25}^6\Bigg(\frac{1}{R(q^5)}-q-q^2R\big(q^5\big)\Bigg)^6+7\sum_{n=0}^\infty b_{17}(n)q^{5n+3}\pmod{17}.
\end{aligned}
\end{equation*}
If we extract the terms involving $q^{5n+3}$, divide by $q^3$, and substitute $q$ by $q^{1/5}$, then by (\ref{eq:2.3}) we get
\begin{equation*}
\sum_{n=0}^\infty b_{17}(5^2n+16)q^n\equiv\frac{f_5^{18}}{f_1^2}\Bigg(q^3\Bigg)+12f_1^4f_5^{12}\Bigg(3\frac{f_1^{12}}{f_5^{12}}+3q^2\Bigg)+14f_1^{10}f_5^6\Bigg(11\frac{f_1^6}{f_5^6}+9q\Bigg)+7\sum_{n=0}^\infty b_{17}(n)q^n\pmod{17},
\end{equation*}
which, after rearranging the similar terms, yields
\begin{equation*}
\sum_{n=0}^\infty b_{17}\big(5^2n+16\big)q^n\equiv7qf_1^{10}f_5^6+2q^2f_1^4f_5^{12}+q^3\frac{f_5^{18}}{f_1^2}+10\sum_{n=0}^\infty b_{17}(n)q^n\pmod{17}.
\end{equation*}

Replacing $f_1$ and $1/f_1$ using (\ref{eq:2.1}) and (\ref{eq:2.2}), we get
\begin{equation*}
\begin{aligned}[b]
\sum_{n=0}^\infty b_{17}\big(5^2n+16\big)q^n&\equiv7qf_5^6f_{25}^{10}\Bigg(\frac{1}{R(q^5)}-q-q^2R\big(q^5\big)\Bigg)^{10}+2q^2f_5^{12}f_{25}^4\Bigg(\frac{1}{R(q^5)}-q-q^2R\big(q^5\big)\Bigg)^4\\
&+q^3f_5^6f_{25}^{10}\Bigg(\frac{1}{R(q^5)^4}+\frac{q}{R(q^5)^3}+\frac{2q^2}{R(q^5)^2}+\frac{3q^3}{R(q^5)}+5q^4-3q^5R\big(q^5\big)+2q^6R\big(q^5\big)^2\\
&\quad\quad-q^7R\big(q^5\big)^3+q^8R\big(q^5\big)^4\Bigg)^2\\
&+10\sum_{n=0}^\infty b_{17}(n)q^n\pmod{17}.
\end{aligned}
\end{equation*}
If we extract the terms involving $q^{5n+1}$, divide by $q$, and substitute $q$ by $q^{1/5}$, then by (\ref{eq:2.3}) we get
\begin{equation*}
\begin{aligned}[b]
\sum_{n=0}^\infty b_{17}\big(5^3n+41\big)q^n&\equiv7f_1^6f_5^{10}\Bigg(\frac{f_1^{12}}{f_5^{12}}+12q\frac{f_1^6}{f_5^6}+q^2\Bigg)+2f_1^{12}f_5^4\Bigg(12q\Bigg)+f_1^6f_5^{10}\Bigg(10q\frac{f_1^6}{f_5^6}+6q^2\Bigg)\\
&\quad+10\sum_{n=0}^\infty b_{17}(5n+1)q^n\pmod{17},
\end{aligned}
\end{equation*}
which, after rearranging the similar terms, yields
\begin{equation*}
\begin{aligned}[b]
\sum_{n=0}^\infty b_{17}\big(5^3n+41\big)q^n&\equiv7\frac{f_1^{18}}{f_5^2}+16qf_1^{12}f_5^4+13q^2f_1^6f_5^{10}+10\sum_{n=0}^\infty b_{17}(5n+1)q^n\pmod{17}.
\end{aligned}
\end{equation*}
Now by (\ref{eq:3.3}), we get
\begin{equation}\label{eq:3.4}
\sum_{n=0}^\infty b_{17}\big(5^3n+41\big)q^n\equiv2\sum_{n=0}^\infty b_{17}(n)q^{5n+3}\pmod{17}.
\end{equation}

Comparing the coefficients of the terms of the form $q^{5n+3}$, we can conclude that for any $n\geq0$,
\begin{equation*}
b_{17}\big(5^4n+416\big)\equiv2b_{17}(n)\pmod{17},
\end{equation*}
or,
\begin{equation*}
b_{17}\Bigg(5^4n+\frac{2}{3}\Big(5^4-1\Big)\Bigg)\equiv2b_{17}(n)\pmod{17}.
\end{equation*}
This proves (\ref{eq:1.5}) for $k=1$. Suppose
\begin{equation*}
b_{17}\Bigg(5^{4(k-1)}n+\frac{2}{3}\Big(5^{4(k-1)}-1\Big)\Bigg)\equiv2^{k-1}b_{17}(n)\pmod{17}.
\end{equation*}
If we substitute $n$ by $5^4n+\frac{2}{3}\Big(5^4-1\Big)$, then we get
\begin{equation*}
b_{17}\Bigg(5^{4(k-1)}\bigg(5^4n+\frac{2}{3}\Big(5^4-1\Big)\bigg)+\frac{2}{3}\Big(5^{4(k-1)}-1\Big)\Bigg)\equiv2^{k-1}b_{17}\bigg(5^4n+\frac{2}{3}\Big(5^4-1\Big)\bigg)\pmod{17},
\end{equation*}
which yields
\begin{equation*}
b_{17}\Bigg(5^{4k}n+\frac{2}{3}\Big(5^{4k}-1\Big)\Bigg)\equiv2^kb_{17}(n)\pmod{17}.
\end{equation*}
This completes the proof of (\ref{eq:1.5}) by the principle of mathematical induction.

In (\ref{eq:3.4}), comparing the coefficients of the terms of the form $q^{5n+r}$ where $r\in\{0,1,2,4\}$, we can conclude that for any $n\geq0$,
\begin{equation*}
b_{17}\Bigg(5^3(5n+r)+\frac{5^3-2}{3}\Bigg)\equiv0\pmod{17},
\end{equation*}
or,
\begin{equation}\label{eq:3.5}
b_{17}\Bigg(5^4n+\frac{1}{3}\Big(5^3(3r+1)-2\Big)\Bigg)\equiv0\pmod{17}.
\end{equation}
Now by (\ref{eq:1.5}) and (\ref{eq:3.5}), we get
\begin{equation*}
b_{17}\Bigg(5^{4k+4}n+\frac{1}{3}\Big(5^{4k+3}(3r+1)-2\Big)\Bigg)\equiv0\pmod{17}.
\end{equation*}
This completes the proof of the theorem.
\end{proof}

\section{Congruences for 23-Regular Partitions}
We first establish the following lemma.
\begin{lemma}
For any $n\geq0$, $k\geq0$,
\begin{equation}\label{eq:4.1}
b_{23}\Bigg(5^{2k}n+\frac{11}{12}\Big(5^{2k}-1\Big)\Bigg)\equiv a'(k)b_{23}\big(5^2n+22\big)+a(k)b_{23}(n)\pmod{23},
\end{equation}
where $a(k)$ is defined by (\ref{eq:1.9}) and
\begin{equation*}
a'(k)=\frac{\sqrt{15}}{30}\big(2+\sqrt{15}\big)^k-\frac{\sqrt{15}}{30}\big(2-\sqrt{15}\big)^k.
\end{equation*}
\end{lemma}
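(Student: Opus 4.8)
The plan is to compress the whole family (\ref{eq:4.1}) into one generating-function identity and then finish by a linear-recurrence argument. For $k\geq 0$ put
\[
G_k(q):=\sum_{n\geq 0}b_{23}\Big(5^{2k}n+\frac{11}{12}\big(5^{2k}-1\big)\Big)q^n ,
\]
so that $G_0=\sum_n b_{23}(n)q^n$ and $G_1=\sum_n b_{23}(25n+22)q^n$. Because $22\cdot 5^{2k}+\frac{11}{12}(5^{2k}-1)=\frac{11}{12}(5^{2k+2}-1)$, the series $G_{k+1}$ is obtained from $G_k$ by extracting the progression $25n+22$, dividing by $q^{22}$ and substituting $q\mapsto q^{1/25}$; write $U$ for this operator, so that $G_k=U^kG_0$ and $UG_k=G_{k+1}$ for all $k$. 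The cases $k=0,1$ of (\ref{eq:4.1}) are immediate, since $a(0)=1$, $a(1)=0$, $a'(0)=0$ and $a'(1)=1$ by direct substitution in (\ref{eq:1.9}) and in the displayed formula for $a'$.

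The substantive step is the single identity
\[
G_2\equiv 4\,G_1+11\,G_0\pmod{23},
\]
i.e. $\sum_n b_{23}(625n+572)q^n\equiv 4\sum_n b_{23}(25n+22)q^n+11\sum_n b_{23}(n)q^n\pmod{23}$. I would prove it by four successive $5$-dissections, exactly in the spirit of the proofs of Theorems~\ref{thm4} and~\ref{thm1}: start from $\sum_n b_{23}(n)q^n\equiv f_1^{22}\pmod{23}$ (from (\ref{eq:1.1}) and (\ref{eq:1.2})); at each stage replace $f_1$ by its $5$-dissection (\ref{eq:2.1}), extract the correct residue class modulo $5$, divide by the appropriate power of $q$, and substitute $q\mapsto q^{1/5}$. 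The first residue must be $2$: in $\big(R(q^5)^{-1}-q-q^2R(q^5)\big)^{22}$ a monomial with exponents $a+b+c=22$ carries $q^{b+2c}R(q^5)^{c-a}$ with $c-a\equiv(b+2c)-22\pmod 5$, so only extracting $q^{5n+2}$ makes every surviving power of $R(q^5)$ a multiple of $5$; then (\ref{eq:2.3}) collapses those powers into a polynomial of degree $\leq 4$ in $f_1^6/f_5^6$.

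The remaining residues $4,2,4$ are then forced by the requirement that we land on $625n+572=2+5\cdot 4+25\cdot 2+125\cdot 4$, the same collapsing mechanism going through thanks to the powers of $q$ produced at each stage (as in Theorem~\ref{thm1}); if a factor $1/f_1$ ever appears, one uses (\ref{eq:2.2}) in its place. Along the way one recognises pure eta-powers $q^j f_5^{22}$, $q^j f_{25}^{22}$, and the like as $5$-dilations of $f_1^{22}\equiv\sum_n b_{23}(n)q^n$, and the intermediate expression for $G_1$ itself as such a pure power under a later dissection; this is what makes the final collapse reorganise into $4G_1+11G_0$ rather than something less structured. The main obstacle is exactly this bookkeeping — carrying the multinomial expansions of the high powers through the four dissections and the repeated use of (\ref{eq:2.3})–(\ref{eq:2.2}), and checking that the two coefficients that survive are precisely $4$ and $11$; it is entirely computational but lengthy.

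Once $G_2\equiv 4G_1+11G_0$ is established, the remainder is formal. Applying $U$ repeatedly and using $UG_k=G_{k+1}$ gives
\[
G_{k+2}\equiv 4\,G_{k+1}+11\,G_k\pmod{23}\qquad(k\geq 0).
\]
The characteristic polynomial of this recurrence is $x^2-4x-11=\big(x-(2+\sqrt{15})\big)\big(x-(2-\sqrt{15})\big)$, and $a(k)$ and $a'(k)$ are, by construction, its two solutions with initial data $a(0)=1,\ a(1)=0$ and $a'(0)=0,\ a'(1)=1$. Hence the sequence $a'(k)G_1+a(k)G_0$ satisfies the same second-order recurrence as $G_k$ with the same values at $k=0$ and $k=1$, so $G_k\equiv a'(k)G_1+a(k)G_0\pmod{23}$ for every $k$; comparing the coefficient of $q^n$ gives (\ref{eq:4.1}). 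One should also observe, for the statement to make sense modulo $23$, that although $a(k)$ and $a'(k)$ are written with $\sqrt{15}$ they take values in $\ZZ/23\ZZ$: each is fixed by $\sqrt{15}\mapsto-\sqrt{15}$, and $15$ and $30$ are invertible modulo $23$.
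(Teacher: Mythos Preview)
Your outline is correct and mirrors the paper's proof essentially step for step: the paper also establishes the key relation $b_{23}(5^4n+572)\equiv 4\,b_{23}(5^2n+22)+11\,b_{23}(n)\pmod{23}$ via four successive $5$-dissections (extracting residues $2,4,2,4$ exactly as you describe, using (\ref{eq:2.1}), (\ref{eq:2.2}), (\ref{eq:2.3}) and recognising the intermediate expressions (\ref{eq:4.3}) and (\ref{eq:4.4})), then closes by the same second-order induction via (\ref{eq:4.9})--(\ref{eq:4.10}). Your operator-$U$ packaging of the induction is a cosmetic streamlining of the paper's explicit substitution $n\mapsto 5^{2(k-1)}n+\frac{11}{12}(5^{2(k-1)}-1)$, but the content is identical.
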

\begin{proof}
For $k=0,1$, (\ref{eq:4.1}) can be easily verified.

Taking $l=23$ in (\ref{eq:1.1}), we get
\begin{equation*}
\sum_{n=0}^\infty b_{23}(n)q^n=\frac{f_{23}}{f_1}.
\end{equation*}
Now by (\ref{eq:1.2}),
\begin{equation}\label{eq:4.2}
\sum_{n=0}^\infty b_{23}(n)q^n\equiv f_1^{22}\pmod{23}.
\end{equation}
Replacing $f_1$ using (\ref{eq:2.1}) and then extracting the terms involving $q^{5n+2}$, we get
\begin{equation*}
\begin{aligned}[b]
\sum_{n=0}^\infty b_{23}(5n+2)q^{5n+2}&\equiv f_{25}^{22}\Bigg(\frac{2q^2}{R(q^5)^{20}}+\frac{21q^7}{R(q^5)^{15}}+\frac{3q^{12}}{R(q^5)^{10}}+\frac{8q^{17}}{R(q^5)^5}-q^{22}-8q^{27}R\big(q^5\big)^5\\
&\quad+3q^{32}R\big(q^5\big)^{10}-21q^{37}R\big(q^5\big)^{15}+2q^{42}R\big(q^5\big)^{20}\Bigg)\pmod{23}.
\end{aligned}
\end{equation*}
If we divide by $q^2$ and substitute $q$ by $q^{1/5}$, then we get
\begin{equation*}
\begin{aligned}[b]
\sum_{n=0}^\infty b_{23}(5n+2)q^n&\equiv f_5^{22}\Bigg(2\bigg(\frac{1}{R(q)^5}-11q-q^2R(q)^5\bigg)^4+17q\bigg(\frac{1}{R(q)^5}-11q-q^2R(q)^5\bigg)^3\\
&\quad+17q^2\bigg(\frac{1}{R(q)^5}-11q-q^2R(q)^5\bigg)^2+14q^4\Bigg)\pmod{23}.
\end{aligned}
\end{equation*}
By (\ref{eq:2.3}), we get
\begin{equation*}
\sum_{n=0}^\infty b_{23}(5n+2)q^n\equiv f_5^{22}\Bigg(2\frac{f_1^{24}}{f_5^{24}}+17q\frac{f_1^{18}}{f_5^{18}}+17q^2\frac{f_1^{12}}{f_5^{12}}+14q^4\Bigg)\pmod{23}.
\end{equation*}
In view of (\ref{eq:4.2}), we get
\begin{equation}\label{eq:4.3}
\sum_{n=0}^\infty b_{23}(5n+2)q^n\equiv2\frac{f_1^{24}}{f_5^2}+17qf_1^{18}f_5^4+17q^2f_1^{12}f_5^{10}+14\sum_{n=0}^\infty b_{23}(n)q^{5n+4}\pmod{23}.
\end{equation}

Replacing $f_1$ using (\ref{eq:2.1}), we get
\begin{equation*}
\begin{aligned}[b]
\sum_{n=0}^\infty b_{23}(5n+2)q^n&\equiv2\frac{f_{25}^{24}}{f_5^2}\Bigg(\frac{1}{R(q^5)}-q-q^2R\big(q^5\big)\Bigg)^{24}+17qf_5^4f_{25}^{18}\Bigg(\frac{1}{R(q^5)}-q-q^2R\big(q^5\big)\Bigg)^{18}\\
&\quad+17q^2f_5^{10}f_{25}^{12}\Bigg(\frac{1}{R(q^5)}-q-q^2R\big(q^5\big)\Bigg)^{12}+14\sum_{n=0}^\infty b_{23}(n)q^{5n+4}\pmod{23}.
\end{aligned}
\end{equation*}
If we extract the terms involving $q^{5n+4}$, divide by $q^4$, and substitute $q$ by $q^{1/5}$, then by (\ref{eq:2.3}) we get
\begin{align*}
\sum_{n=0}^\infty b_{23}(5^2n+22)q^n&\equiv2\frac{f_5^{24}}{f_1^2}\Bigg(q^4\Bigg)+17f_1^4f_5^{18}\Bigg(19\frac{f_1^{18}}{f_5^{18}}+7q^3\Bigg)+17f_1^{10}f_5^{12}\Bigg(8\frac{f_1^{12}}{f_5^{12}}+3q^2\Bigg)\\
&\quad+14\sum_{n=0}^\infty b_{23}(n)q^n\pmod{23},
\end{align*}
which, after rearranging the similar terms, yields
\begin{equation}\label{eq:4.4}
\sum_{n=0}^\infty b_{23}\big(5^2n+22\big)q^n\equiv5q^2f_1^{10}f_5^{12}+4q^3f_1^4f_5^{18}+2q^4\frac{f_5^{24}}{f_1^2}+13\sum_{n=0}^\infty b_{23}(n)q^n\pmod{23}.
\end{equation}

Replacing $f_1$ and $1/f_1$ using (\ref{eq:2.1}) and (\ref{eq:2.2}), we get
\begin{equation*}
\begin{aligned}[b]
\sum_{n=0}^\infty b_{23}\big(5^2n+22\big)q^n&\equiv5q^2f_5^{12}f_{25}^{10}\Bigg(\frac{1}{R(q^5)}-q-q^2R\big(q^5\big)\Bigg)^{10}+4q^3f_5^{18}f_{25}^4\Bigg(\frac{1}{R(q^5)}-q-q^2R\big(q^5\big)\Bigg)^4\\
&\quad+2q^4f_5^{12}f_{25}^{10}\Bigg(\frac{1}{R(q^5)^4}+\frac{q}{R(q^5)^3}+\frac{2q^2}{R(q^5)^2}+\frac{3q^3}{R(q^5)}+5q^4-3q^5R\big(q^5\big)+2q^6R\big(q^5\big)^2\\
&\quad\quad-q^7R\big(q^5\big)^3+q^8R\big(q^5\big)^4\Bigg)^2\\
&\quad+13\sum_{n=0}^\infty b_{23}(n)q^n\pmod{23}.
\end{aligned}
\end{equation*}
If we extract the terms involving $q^{5n+2}$, divide by $q^2$, and substitute $q$ by $q^{1/5}$, then by (\ref{eq:2.3}) we get
\begin{equation*}
\begin{aligned}[b]
\sum_{n=0}^\infty b_{23}\big(5^3n+72\big)q^n&\equiv5f_1^{12}f_5^{10}\Bigg(\frac{f_1^{12}}{f_5^{12}}+20q\frac{f_1^6}{f_5^6}+16q^2\Bigg)+4f_1^{18}f_5^4\Bigg(18q\Bigg)+2f_1^{12}f_5^{10}\Bigg(10q\frac{f_1^6}{f_5^6}+10q^2\Bigg)\\
&\quad+13\sum_{n=0}^\infty b_{23}(5n+2)q^n\pmod{23},
\end{aligned}
\end{equation*}
which, after rearranging the similar terms, yields
\begin{equation*}
\sum_{n=0}^\infty b_{23}\big(5^3n+72\big)q^n\equiv5\frac{f_1^{24}}{f_5^2}+8qf_1^{18}f_5^4+8q^2f_1^{12}f_5^{10}+13\sum_{n=0}^\infty b_{23}(5n+2)q^n\pmod{23}.
\end{equation*}
Now by (\ref{eq:4.3}), we get
\begin{equation*}
\sum_{n=0}^\infty b_{23}\big(5^3n+72\big)q^n\equiv8\frac{f_1^{24}}{f_5^2}+22qf_1^{18}f_5^4+22q^2f_1^{12}f_5^{10}+21\sum_{n=0}^\infty b_{23}(n)q^{5n+4}\pmod{23}.
\end{equation*}

Replacing $f_1$ using (\ref{eq:2.1}), we get
\begin{equation*}
\begin{aligned}[b]
\sum_{n=0}^\infty b_{23}(5^3n+72)q^n&\equiv8\frac{f_{25}^{24}}{f_5^2}\Bigg(\frac{1}{R(q^5)}-q-q^2R\big(q^5\big)\Bigg)^{24}+22qf_5^4f_{25}^{18}\Bigg(\frac{1}{R(q^5)}-q-q^2R\big(q^5\big)\Bigg)^{18}\\
&\quad+22q^2f_5^{10}f_{25}^{12}\Bigg(\frac{1}{R(q^5)}-q-q^2R\big(q^5\big)\Bigg)^{12}+21\sum_{n=0}^\infty b_{23}(n)q^{5n+4}\pmod{23}.
\end{aligned}
\end{equation*}
If we extract the terms involving $q^{5n+4}$, divide by $q^4$, and substitute $q$ by $q^{1/5}$, then by (\ref{eq:2.3}) we get
\begin{align*}
\sum_{n=0}^\infty b_{23}(5^4n+572)q^n&\equiv8\frac{f_5^{24}}{f_1^2}\Bigg(q^4\Bigg)+22f_1^4f_5^{18}\Bigg(19\frac{f_1^{18}}{f_5^{18}}+7q^3\Bigg)+22f_1^{10}f_5^{12}\Bigg(8\frac{f_1^{12}}{f_5^{12}}+3q^2\Bigg)\\
&\quad+21\sum_{n=0}^\infty b_{23}(n)q^n\pmod{23},
\end{align*}
which, after rearranging the similar terms, yields
\begin{equation*}
\sum_{n=0}^\infty b_{23}\big(5^4n+572\big)q^n\equiv20q^2f_1^{10}f_5^{12}+16q^3f_1^4f_5^{18}+8q^4\frac{f_5^{24}}{f_1^2}+17\sum_{n=0}^\infty b_{23}(n)q^n\pmod{23}.
\end{equation*}
Now by (\ref{eq:4.4}), we get
\begin{equation*}
\sum_{n=0}^\infty b_{23}\big(5^4n+572\big)q^n\equiv4\sum_{n=0}^\infty b_{23}\big(5^2n+22\big)q^n+11\sum_{n=0}^\infty b_{23}(n)q^n\pmod{23}.
\end{equation*}
Comparing the coefficients of $q^n$, we can conclude that for any $n\geq0$,
\begin{equation}\label{eq:4.5}
b_{23}\big(5^4n+572\big)\equiv4b_{23}\big(5^2n+22\big)+11b_{23}(n)\pmod{23}.
\end{equation}
This verifies the lemma for $k=2$.

We assume that
\begin{equation}\label{eq:4.6}
b_{23}\Bigg(5^{2(k-1)}n+\frac{11}{12}\Big(5^{2(k-1)}-1\Big)\Bigg)\equiv a'(k-1)b_{23}\big(5^2n+22\big)+a(k-1)b_{23}(n)\pmod{23},
\end{equation}
and
\begin{equation}\label{eq:4.7}
b_{23}\Bigg(5^{2k}n+\frac{11}{12}\Big(5^{2k}-1\Big)\Bigg)\equiv a'(k)b_{23}\big(5^2n+22\big)+a(k)b_{23}(n)\pmod{23}.
\end{equation}
If we substitute $n$ by $5^{2(k-1)}n+\frac{11}{12}\Big(5^{2(k-1)}-1\Big)$ in (\ref{eq:4.5}), then we get
\begin{equation}\label{eq:4.8}
\begin{aligned}
b_{23}\Bigg(5^{2(k+1)}n+\frac{11}{12}\Big(5^{2(k+1)}-1\Big)\Bigg)&\equiv4b_{23}\Bigg(5^{2k}n+\frac{11}{12}\Big(5^{2k}-1\Big)\Bigg)\\
&\quad+11b_{23}\Bigg(5^{2(k-1)}n+\frac{11}{12}\Big(5^{2(k-1)}-1\Big)\Bigg)\pmod{23}.
\end{aligned}
\end{equation}
It is easy to verify that for any $k\geq1$
\begin{equation}\label{eq:4.9}
a'(k+1)=4a'(k)+11a'(k-1),
\end{equation}
and
\begin{equation}\label{eq:4.10}
a(k+1)=4a(k)+11a(k-1).
\end{equation}
By (\ref{eq:4.6}), (\ref{eq:4.7}), (\ref{eq:4.8}), (\ref{eq:4.9}), and (\ref{eq:4.10}), we can conclude that
\begin{equation*}
b_{23}\Bigg(5^{2(k+1)}n+\frac{11}{12}\Big(5^{2(k+1)}-1\Big)\Bigg)\equiv a'(k+1)b_{23}\big(5^2n+22\big)+a(k+1)b_{23}(n)\pmod{23}.
\end{equation*}
Hence the lemma is proved by the principle of mathematical induction.
\end{proof}
\begin{proof}[\textbf{Proof of Theorem \ref{thm2}}]
In order to prove (\ref{eq:1.6}), it is enough to prove that $a'(12k)\equiv0\pmod{23}$ for any $k\geq0$. Clearly, $a'(0)=0$. By repeatedly applying (\ref{eq:4.9}), we get
\begin{equation*}
\begin{aligned}[b]
a'(12)&\equiv4a'(11)+11a'(10)&&\pmod{23}\\
&\equiv4a'(10)+21a'(9)&&\pmod{23}\\
&\quad\quad\vdots\\
&\equiv18a'(2)+20a'(1)&&\pmod{23}\\
&\equiv14a'(0)&&\pmod{23}.
\end{aligned}
\end{equation*}
Consequently, $a'(12)\equiv0\pmod{23}$. Let's assume $a'(12(k-1))\equiv0\pmod{23}$. Again, by repeatedly applying (\ref{eq:4.9}), we get $a'(12k)\equiv14a'(12(k-1))\pmod{23}$ and hence $a'(12k)\equiv0\pmod{23}$. This completes the proof of (\ref{eq:1.6}) by the principle of mathematical induction.

Putting $k=11$ in (\ref{eq:4.1}), we get
\begin{equation*}
b_{23}\Bigg(5^{22}n+\frac{11}{12}\Big(5^{22}-1\Big)\Bigg)\equiv 18b_{23}\big(5^2n+22\big)+20b_{23}(n)\pmod{23}\quad\forall n\geq0,
\end{equation*}
or
\begin{equation*}
\sum_{n=0}^\infty b_{23}\Bigg(5^{22}n+\frac{11}{12}\Big(5^{22}-1\Big)\Bigg)q^n\equiv 18\sum_{n=0}^\infty b_{23}\big(5^2n+22\big)q^n+20\sum_{n=0}^\infty b_{23}(n)q^n\pmod{23}.
\end{equation*}
Now by (\ref{eq:4.4}), we get
\begin{equation*}
\sum_{n=0}^\infty b_{23}\Bigg(5^{22}n+\frac{11}{12}\Big(5^{22}-1\Big)\Bigg)q^n\equiv21q^2f_1^{10}f_5^{12}+3q^3f_1^4f_5^{18}+13q^4\frac{f_5^{24}}{f_1^2}+\sum_{n=0}^\infty b_{23}(n)q^n\pmod{23}.
\end{equation*}
Replacing $f_1$ and $1/f_1$ using (\ref{eq:2.1}) and (\ref{eq:2.2}), we get
\begin{equation*}
\begin{aligned}[b]
\sum_{n=0}^\infty b_{23}\Bigg(5^{22}n+&\frac{11}{12}\Big(5^{22}-1\Big)\Bigg)q^n\\
&\equiv21q^2f_5^{12}f_{25}^{10}\Bigg(\frac{1}{R(q^5)}-q-q^2R\big(q^5\big)\Bigg)^{10}+3q^3f_5^{18}f_{25}^4\Bigg(\frac{1}{R(q^5)}-q-q^2R\big(q^5\big)\Bigg)^4\\
&+13q^4f_5^{12}f_{25}^{10}\Bigg(\frac{1}{R(q^5)^4}+\frac{q}{R(q^5)^3}+\frac{2q^2}{R(q^5)^2}+\frac{3q^3}{R(q^5)}+5q^4-3q^5R\big(q^5\big)+2q^6R\big(q^5\big)^2\\
&\quad\quad-q^7R\big(q^5\big)^3+q^8R\big(q^5\big)^4\Bigg)^2\\
&+\sum_{n=0}^\infty b_{23}(n)q^n\pmod{23}.
\end{aligned}
\end{equation*}
If we extract the terms involving $q^{5n+2}$, divide by $q^2$, and substitute $q$ by $q^{1/5}$, then by (\ref{eq:2.3}) we get
\begin{equation*}
\begin{aligned}[b]
\sum_{n=0}^\infty b_{23}\Bigg(5^{23}n+\frac{7\cdot5^{23}-11}{12}\Bigg)q^n&\equiv21f_1^{12}f_5^{10}\Bigg(\frac{f_1^{12}}{f_5^{12}}+20q\frac{f_1^6}{f_5^6}+16q^2\Bigg)+3f_1^{18}f_5^4\Bigg(18q\Bigg)\\
&\quad+13f_1^{12}f_5^{10}\Bigg(10q\frac{f_1^6}{f_5^6}+10q^2\Bigg)+\sum_{n=0}^\infty b_{23}(5n+2)q^n\pmod{23},
\end{aligned}
\end{equation*}
which, after rearranging the similar terms, yields
\begin{equation*}
\sum_{n=0}^\infty b_{23}\Bigg(5^{23}n+\frac{7\cdot5^{23}-11}{12}\Bigg)q^n\equiv21\frac{f_1^{24}}{f_5^2}+6qf_1^{18}f_5^4+6q^2f_1^{12}f_5^{10}+\sum_{n=0}^\infty b_{23}(5n+2)q^n\pmod{23}.
\end{equation*}
Now by (\ref{eq:4.3}), we get
\begin{equation*}
\sum_{n=0}^\infty b_{23}\Bigg(5^{23}n+\frac{7\cdot5^{23}-11}{12}\Bigg)q^n\equiv14\sum_{n=0}^\infty b_{23}(n)q^{5n+4}\pmod{23}.
\end{equation*}

Comparing the coefficients of the terms of the form $q^{5n+r}$ where $r\in\{0,1,2,3\}$, we can conclude that for any $n\geq0$,
\begin{equation*}
b_{23}\Bigg(5^{23}(5n+r)+\frac{7\cdot5^{23}-11}{12}\Bigg)\equiv0\pmod{23},
\end{equation*}
or,
\begin{equation}\label{eq:4.11}
b_{23}\Bigg(5^{24}n+\frac{1}{12}\Big(5^{23}(12r+7)-11\Big)\Bigg)\equiv0\pmod{23}.
\end{equation}
Now by (\ref{eq:1.6}) and (\ref{eq:4.11}), we get
\begin{equation*}
b_{23}\Bigg(5^{24k+24}n+\frac{1}{12}\Big(5^{24k+23}(12r+7)-11\Big)\Bigg)\equiv0\pmod{23}.
\end{equation*}
This completes the proof of the theorem.
\end{proof}

\newpage

\end{document}